\newtheorem{thm}{Theorem}
\newtheorem{lem}[thm]{Lemma}
\newtheorem{prop}[thm]{Proposition}
\newtheorem{cor}[thm]{Corollary}
\numberwithin{thm}{section}
\numberwithin{equation}{section}
\theoremstyle{definition}
\newtheorem*{conj}{Conjecture}
\newcommand{\rat}{\mathbb Q}
\newcommand{\alg}{\overline\rat}
\newcommand{\algt}{\alg^{\times}}
\newcommand{\intg}{\mathbb Z}
\newcommand{\X}{\mathcal X}
\newcommand{\G}{\mathcal G}
\newcommand{\tor}{\mathrm{Tor}}
\title{On the non-Archimedean metric Mahler measure}
\author[P. Fili \and C.L. Samuels]{Paul Fili \and Charles L. Samuels}
\address{Department of Mathematics, University of Texas at Austin, 1 University Station C1200
  Austin, TX 78712}
\email{pfili@math.utexas.edu}
\address{Max-Planck-Institut f\"ur Mathematik, Vivatsgasse 7, 53111 Bonn, Germany}
\email{csamuels@mpim-bonn.mpg.de}
\subjclass[2000]{Primary 11R04, 11R09}
\keywords{Weil height, Mahler measure, Lehmer's problem}
\begin{document}

\begin{abstract}
Recently, Dubickas and Smyth constructed and examined the metric Mahler measure and the metric na\"ive height on
the multiplicative group of algebraic numbers.  We give a non-Archimedean version of the metric Mahler measure,
denoted $M_\infty$, and prove that $M_\infty(\alpha) = 1$ if and only if $\alpha$ is a root of unity.
We further show that $M_\infty$ defines a projective height on $\algt/\tor(\algt)$ as a vector space over $\rat$.
Finally, we demonstrate how to compute $M_\infty(\alpha)$ when $\alpha$ is a surd.
\end{abstract}

\maketitle

\section{Introduction}

Let $K$ be a number field and $v$ a place of $K$ dividing the place $p$ of $\rat$.  Let $K_v$ and $\rat_p$
denote the respective completions.  We write $\|\cdot\|_v$ to denote the unique absolute value on $K_v$ 
extending the $p$-adic absolute value on $\rat_p$ and define 
\begin{equation*}
  |\alpha|_v=\|\alpha \|_v^{[K_v:\rat_p]/[K:\rat]}
\end{equation*}
for all $\alpha\in K$.  Define the {\it Weil height} of $\alpha\in K$ by
\begin{equation*} \label{WeilHeightDef}
  H(\alpha) = \prod_v\max\{1,|\alpha|_v\}
\end{equation*}
where the product is taken over all places $v$ of $K$.  Given this normalization of our absolute values,
the above definition does not depend on $K$, and therefore, $H$ is a well-defined function on $\alg$.
Clearly $H(\alpha)\geq 1$, and by Kronecker's Theorem, we have equality precisely when $\alpha$ is zero 
or a root of unity.

We further define the {\it Mahler measure} of $\alpha\in\alg$ by
\begin{equation*}
  M(\alpha) = H(\alpha)^{\deg \alpha}
\end{equation*}
where $\deg \alpha$ denotes the degree of $\alpha$ over $\rat$.  It is simple to compute
the Mahler measure of $\alpha$ in terms of its minimal polynomial $f_\alpha$ over $\intg$.
If we write
\begin{equation*}
  f_\alpha(z) = A\cdot\prod_{n=1}^N (x-\alpha_n)
\end{equation*}
then, since $H$ is invariant under Galois conjugation over $\rat$, we have that
\begin{equation} \label{MahlerProduct}
	M(\alpha) = \prod_{n=1}^N H(\alpha_n).
\end{equation}

Certainly $M(\alpha) = 1$ if and only if $\alpha$ is a root of unity.  As part of an algorithm for
computing large primes, D.H. Lehmer \cite{Lehmer} asked if there exists a sequence of algebraic numbers, 
none of which are roots of unity, whose Mahler measures tend to $1$.  The smallest Mahler measure that he
found occurs when $\gamma$ is a root of  
\begin{equation*}
  \ell(x) = x^{10}+x^9-x^7-x^6-x^5-x^4-x^3+x+1
\end{equation*}
in which case $M(\gamma) = 1.17\ldots$.  Since Lehmer's famous paper, many algorithms have
been implemented to find numbers of small Mahler measure (see \cite{Moss, MossWeb, MPV}, for instance), and all have failed to produce an 
algebraic number of Mahler measure smaller than $M(\gamma)$.  This led to the conjecture, 
now known as Lehmer's conjecture, that there does not exist such a sequence.

\begin{conj} \label{LehmerConj}
  There exists a constant $c>1$ such that $M(\alpha) \geq c$ whenever $\alpha\in\algt$ is not a
  root of unity.
\end{conj}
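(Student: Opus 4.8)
The plan is to run the auxiliary-polynomial method that underlies every unconditional lower bound for $M$, sharpened by two reductions that shrink the class of $\alpha$ one must treat, and then to confront head-on the one step on which a \emph{uniform} constant depends. \emph{Step 1: reduce to the reciprocal case.} By Smyth's theorem, together with the trivial bound $M(\alpha)\ge 2$ when $\alpha$ is an algebraic non-integer, one may assume $\alpha$ is an algebraic integer that is reciprocal, i.e.\ $\alpha^{-1}$ is a conjugate of $\alpha$ over $\rat$; otherwise $M(\alpha)\ge\theta_0=1.324\ldots$, the real root of $x^3-x-1$ and the smallest Pisot number. So I fix a reciprocal algebraic integer $\alpha$ of degree $d=\deg\alpha$ and try to bound $M(\alpha)$ below by an absolute constant.

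\emph{Step 2: reduce to ``primitive'' $\alpha$.} The present paper exhibits $\algt/\tor(\algt)$ as a $\rat$-vector space carrying the projective height $M_\infty$, and I would exploit that structure for a descent: whenever $\rat(\alpha)$ is a compositum of proper subfields, write $\alpha$ as a product of elements of strictly smaller degree, apply the target inequality recursively, and recombine using the multiplicativity of $H$ along Galois orbits. This reduces matters to $\alpha$ for which $\rat(\alpha)/\rat$ has no nontrivial intermediate field, a rigidity hypothesis I would then try to exploit through the Galois-module structure of the algebraic numbers of small Weil height, in the spirit of the arguments that already yield uniform bounds for abelian extensions (Amoroso--Dvornicich) and for more general restricted extensions (Amoroso--Zannier, Amoroso--David).

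\emph{Step 3: the auxiliary-polynomial estimate.} Fix a rational prime $p$ and an integer parameter $T$. A Siegel-lemma count produces a nonzero $F\in\intg[x]$ with $(\deg F)\log(\deg F)\gtrsim Td$ vanishing to order $\ge T$ at every conjugate $\alpha_i$ of $\alpha$. The Frobenius congruence $f_\alpha(x^p)\equiv f_\alpha(x)^p\pmod{p}$ forces the resultant of $F$ with the relevant twist of $f_\alpha$ to be divisible by a large power of $p$; that resultant is a nonzero integer, since its vanishing would require $\deg F$ larger than allowed, so its Archimedean absolute value --- bounded above by $M(\alpha)$ to a controlled power times an explicit function of the parameters --- is at least that power of $p$. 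Balancing $p$, $T$ and $\deg F$, and summing over several primes, yields an inequality of the shape $\log M(\alpha)\gtrsim(\log\log d/\log d)^3$.

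\emph{The main obstacle.} The conjecture is precisely the assertion that the $(\log\log d/\log d)^3$ factor can be removed, and this has resisted proof since Lehmer raised the question in 1933. The only repulsion among the conjugates that the method can access is the $p$-adic one encoded in the Frobenius congruence, and it is too weak: the Siegel lemma forces $\deg F$ to grow slightly faster than linearly in $d$, the resultant estimate then degrades with $d$, and no known variant --- several variables, congruences modulo $p^k$, non-polynomial auxiliary functions, or the refinements of Dobrowolski and Voutier --- beats the $1/\log d$ loss. Step 2 does not rescue the generic case, where $\rat(\alpha)$ has no proper subfields and the descent is vacuous. A proof of the uniform bound seems to require either a construction of auxiliary forms whose degree does not grow with $d$, or a mechanism that bypasses auxiliary polynomials entirely --- for instance an equidistribution or arithmetic-dynamics argument showing that the conjugates of a hypothetical counterexample cannot accumulate near the unit circle simultaneously in the Archimedean topology and in all the non-Archimedean topologies. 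I regard this as the essential difficulty; Steps 1--2 and the Siegel-lemma bookkeeping in Step 3 are routine by comparison.
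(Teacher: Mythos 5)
There is no proof to compare against: the statement you were asked to prove is Lehmer's Conjecture itself, which the paper states as an open problem (in a \texttt{conj} environment) and makes no attempt to prove; indeed the paper's results are explicitly conditional on, or motivated by, its unresolved status. Your proposal is, to your credit, candid that it does not close the gap --- Step 3 reproduces Dobrowolski's argument and yields only the bound \eqref{Dobrowolski}, and you correctly identify the removal of the $(\log\log d/\log d)^3$ factor as the open problem. So the verdict is simply that the statement remains unproven, by you and by everyone else.

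Two specific remarks on the steps you do present. Step 1 is a legitimate and standard reduction: $M(\alpha)\geq 2$ for algebraic non-integers (the leading coefficient of $f_\alpha$ is at least $2$), and Smyth's theorem handles non-reciprocal algebraic integers. Step 2, however, is not a valid reduction even in outline. A factorization $\alpha=\beta_1\cdots\beta_k$ into elements of smaller degree, each satisfying $M(\beta_i)\geq c$, yields no lower bound whatsoever on $M(\alpha)$: Mahler measure does not satisfy a reverse triangle inequality, and the infimum of $\max_i M(\beta_i)$ over all factorizations is precisely the quantity $M_\infty(\alpha)$ studied in this paper --- whose nontriviality (Corollary \ref{SMZeroSet}) the authors can only establish by invoking Dobrowolski, not by any recombination argument. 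So the ``descent'' in Step 2 is circular with respect to the problem, not merely vacuous in the primitive case. The honest content of your proposal is the concluding paragraph: the known methods lose a factor tending to zero with the degree, and no uniform constant is currently accessible.
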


Although many special cases have been established (see, for example, \cite{Breusch, Schinzel, Smyth}),
Lehmer's problem remains open in general.  The best known universal lower bound on $M(\alpha)$
is due to Dobrowolski \cite{Dobrowolski}, who proved that 
\begin{equation} \label{Dobrowolski}
  \log M(\alpha) \gg \left(\frac{\log\log \deg \alpha}{\log \deg \alpha}\right)^3
\end{equation}
whenever $\alpha$ is not a root of unity.

Recently, Dubickas and Smyth \cite{DubSmyth2} defined and studied the metric Mahler measure on the multiplicative
group of algebraic numbers.  Specifically, let
\begin{equation*}
	\X(\algt) = \{(\alpha_1,\alpha_2,\ldots): \alpha_n\in \algt,\ \alpha_n = 1\ \mathrm{for\ a.e.}\ n\}.
\end{equation*}
That is, each element $(\alpha_1,\alpha_2,\ldots) \in \X(\algt)$ must have $\alpha_n = 1$ for all but finitely many positive integers $n$.
Also define the map $\tau:\X(\algt)\to \algt$ by $\tau(\alpha_1,\alpha_2,\ldots) = \alpha_1\alpha_2\cdots$ and observe that
$\tau$ is a group homomorphism.  Define the {\it metric Mahler measure} by
\begin{equation} \label{DSMetricMM}
  M_1(\alpha) = \inf\left\{\prod_{n=1}^\infty M(\alpha_n): (\alpha_1,\alpha_2,\ldots) \in \tau^{-1}(\alpha)\right\}
\end{equation}
and note that $M_1(\alpha\beta) \leq M_1(\alpha)M_1(\beta)$ for all $\alpha,\beta\in\algt$.
Using the triangle inequality for the Weil height, one verifies easily that
\begin{equation} \label{MetricBounds}
	M(\alpha) \geq M_1(\alpha) \geq H(\alpha)
\end{equation}
which implies, in particular, that $M_1(\alpha) = 1$ if and only if $\alpha$ is a root of unity.
This means that the map $(\alpha,\beta) \mapsto \log M_1(\alpha\beta^{-1})$ defines a metric on the quotient 
group $\G = \algt/\tor(\algt)$.  
In addition, Dubickas and Smyth prove that $M_1(\alpha) = M(\alpha)$ whenever $\alpha$ is a rational number, 
a Pisot number, a Salem number, or a product of such numbers.
Although it is too technical to include here, they further show how to compute $M_1(\alpha)$ when $\alpha$ is a surd.

In this paper, we examine the following non-Archimedean version of the metric Mahler measure.  Define
\begin{equation} \label{StrongMetricMM}
  M_\infty(\alpha) = \inf\left\{\max_{n\geq 1} M(\alpha_n): (\alpha_1,\alpha_2,\ldots) \in \tau^{-1}(\alpha)\right\}
\end{equation}
and note that $M_\infty(\alpha\beta) \leq \max\{M_\infty(\alpha),M_\infty(\beta)\}$ for all $\alpha,\beta\in \algt$.
Our first goal is to show that $M_\infty(\alpha) = 1$ if and only if $\alpha$ is a root of unity.
This fact is nearly trivial in the case of $M_1$, as it follows easily from inequality \eqref{MetricBounds}.
Although we know that $M(\alpha) \geq M_\infty(\alpha)$, we cannot conclude that $M_\infty(\alpha) \geq H(\alpha)$
because $H$ does not have the strong triangle inequality.  In fact, this inequality is false in general because, for example,
$H(4) = 4$ but $M_\infty(4) \leq 2$.  However, we are able to establish a slightly weaker version.

\begin{thm} \label{StrongMetricBound}
	If $\alpha$ is a non-zero algebraic number and not a root of unity then
	\begin{equation} \label{SMZeroSetEq}
		M_\infty(\alpha) \geq \inf\{H(\gamma):  \gamma\in\rat(\alpha)\ \mathrm{and}\ H(\gamma) > 1\}.
	\end{equation}
\end{thm}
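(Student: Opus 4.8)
The plan is to bound $\max_{n\ge 1}M(\alpha_n)$ from below for an arbitrary factorization $(\alpha_1,\alpha_2,\dots)\in\tau^{-1}(\alpha)$ by extracting from it a single element $\beta\in\rat(\alpha)$ that is not a root of unity and satisfies $H(\beta)\le\max_{n\ge 1}M(\alpha_n)$; taking the infimum over all such factorizations then yields \eqref{SMZeroSetEq}. The difficulty is that the $\alpha_n$ may lie in fields far larger than $\rat(\alpha)$, so one cannot simply invoke the definition of $M_\infty$ inside $\rat(\alpha)$. The device for descending to $\rat(\alpha)$ will be Galois averaging: replace each $\alpha_n$ by the product of its \emph{distinct} conjugates over $\rat(\alpha)$.

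In detail, I would fix such a factorization (so $\alpha_n=1$ for all but finitely many $n$), choose a finite Galois extension $L/\rat$ containing $\alpha$ and all the $\alpha_n$, and set $H=\mathrm{Gal}(L/\rat(\alpha))$. For each $n$ let $P_n=\{\sigma(\alpha_n):\sigma\in H\}$ and $\beta_n=\prod_{\zeta\in P_n}\zeta$. Since $P_n$ is stable under $H$, the element $\beta_n$ is fixed by $H$, hence $\beta_n\in L^H=\rat(\alpha)$; and since $P_n$ consists of distinct Galois conjugates of $\alpha_n$ over $\rat$, we have $|P_n|\le\deg\alpha_n$, so by the triangle inequality for the Weil height and its invariance under Galois conjugation,
\begin{equation*}
	H(\beta_n)\ \le\ \prod_{\zeta\in P_n}H(\zeta)\ =\ H(\alpha_n)^{|P_n|}\ \le\ H(\alpha_n)^{\deg\alpha_n}\ =\ M(\alpha_n).
\end{equation*}

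Next I would check that not every $\beta_n$ is a root of unity. By the orbit--stabilizer theorem $\prod_{\sigma\in H}\sigma(\alpha_n)=\beta_n^{m_n}$ with $m_n=|H|/|P_n|$ a positive integer, so
\begin{equation*}
	\prod_n\beta_n^{m_n}\ =\ \prod_n\prod_{\sigma\in H}\sigma(\alpha_n)\ =\ \prod_{\sigma\in H}\sigma\!\Big(\prod_n\alpha_n\Big)\ =\ \prod_{\sigma\in H}\sigma(\alpha)\ =\ \alpha^{|H|}.
\end{equation*}
Since $\alpha$ is nonzero and not a root of unity, neither is $\alpha^{|H|}$, so some factor $\beta_{n_0}$ on the left fails to be a root of unity; being nonzero, it then has $H(\beta_{n_0})>1$ by Kronecker's theorem. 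Therefore
\begin{equation*}
	\max_{n\ge 1}M(\alpha_n)\ \ge\ M(\alpha_{n_0})\ \ge\ H(\beta_{n_0})\ \ge\ \inf\{H(\gamma):\gamma\in\rat(\alpha),\ H(\gamma)>1\},
\end{equation*}
and, the factorization being arbitrary, passing to the infimum in \eqref{StrongMetricMM} finishes the proof.

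I expect the only genuinely delicate point to be the choice of $\beta_n$: taking the full norm $N_{L/\rat(\alpha)}(\alpha_n)=\prod_{\sigma\in H}\sigma(\alpha_n)$ would only give $H\le H(\alpha_n)^{[L:\rat(\alpha)]}$, which is too weak, whereas the product over the distinct $H$-conjugates keeps the exponent at $|P_n|\le\deg\alpha_n$ and so yields exactly $H(\beta_n)\le M(\alpha_n)$. The remaining items are routine bookkeeping: that $\beta_n=1$ and $m_n=|H|$ for the cofinitely many trivial terms (so that the displayed product identity and the finiteness of $L$ cause no trouble), and that each $\beta_n$ is nonzero, so that Kronecker's theorem applies.
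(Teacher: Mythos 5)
Your proof is correct and follows essentially the same route as the paper: both descend to $\rat(\alpha)$ by taking, for each $\alpha_n$, the product of its \emph{distinct} conjugates over $\rat(\alpha)$ (your $\beta_n$, the paper's $\hat\beta$), both observe that the full norms of the $\alpha_n$ multiply to $\alpha^{[K:\rat(\alpha)]}$ so that some $\beta_{n_0}$ escapes being a root of unity, and both then bound $M(\alpha_{n_0})\ge H(\beta_{n_0})$ via the multiplicativity of $H$. The ``delicate point'' you flag — using distinct conjugates rather than the full norm to keep the exponent at most $\deg\alpha_n$ — is precisely the role of $\hat\beta$ in the paper's argument.
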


Dobrowolski's Theorem \eqref{Dobrowolski} implies immediately that the right hand side of \eqref{SMZeroSetEq} is strictly greater than $1$.
By Northcott's Theorem \cite{Northcott}, the set $\{\gamma\in\rat(\alpha):  T > H(\gamma)> 1\}$ is finite for every positive real
number $T$.  This means that the infimum in \eqref{SMZeroSetEq} is, in fact, achieved.  Either result is enough to obtain the following corollary. 

\begin{cor} \label{SMZeroSet}
	$M_\infty(\alpha) = 1$ if and only if $\alpha$ is a root of unity.
\end{cor}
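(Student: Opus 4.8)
The plan is to derive the corollary directly from Theorem \ref{StrongMetricBound}, using either Dobrowolski's inequality \eqref{Dobrowolski} or Northcott's theorem to control the right-hand side of \eqref{SMZeroSetEq}. The first step is the easy implication. If $\alpha$ is a root of unity, then the tuple $(\alpha,1,1,\ldots)$ lies in $\tau^{-1}(\alpha)$, and since Kronecker's theorem gives $H(\alpha)=1$ we have $M(\alpha)=H(\alpha)^{\deg\alpha}=1$, so the maximum of $M(\alpha_n)$ over this tuple equals $1$; hence $M_\infty(\alpha)\le 1$. Conversely, $M(\beta)=H(\beta)^{\deg\beta}\ge 1$ for every $\beta\in\algt$, so every tuple in $\X(\algt)$ forces $\max_n M(\alpha_n)\ge 1$, and therefore $M_\infty(\alpha)\ge 1$ unconditionally. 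Combining the two bounds gives $M_\infty(\alpha)=1$.

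For the reverse implication, suppose $\alpha\in\algt$ is not a root of unity. By Theorem \ref{StrongMetricBound} it suffices to show that
\[
  \inf\{H(\gamma):\gamma\in\rat(\alpha),\ H(\gamma)>1\}>1.
\]
Put $d=[\rat(\alpha):\rat]$. Each $\gamma$ occurring in this infimum has $H(\gamma)>1$, hence by Kronecker's theorem is nonzero and not a root of unity, and satisfies $\deg\gamma\le d$. If $\deg\gamma=1$ then $\gamma$ is a nonunit rational and so $H(\gamma)\ge 2$. If $2\le\deg\gamma\le d$, then Dobrowolski's theorem \eqref{Dobrowolski} bounds $\log M(\gamma)$ below by a positive quantity depending only on $\deg\gamma$, hence, over this finite range of degrees, by a positive constant $c=c(d)$. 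Since $\log M(\gamma)=(\deg\gamma)\log H(\gamma)\le d\log H(\gamma)$, in every case $\log H(\gamma)$ is bounded below by a positive constant depending only on $d$, so the infimum exceeds $1$ and therefore $M_\infty(\alpha)>1$.

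Alternatively, and without any appeal to Dobrowolski, one can argue with Northcott's theorem: if the infimum in \eqref{SMZeroSetEq} were equal to $1$, there would be a sequence $\gamma_k\in\rat(\alpha)$ with $1<H(\gamma_k)\to 1$ taking infinitely many distinct values, yet all but finitely many of the $\gamma_k$ would lie in the finite set $\{\gamma\in\rat(\alpha):2>H(\gamma)>1\}$, which has a smallest height strictly exceeding $1$ — a contradiction. Either route completes the argument.

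I do not anticipate a genuine obstacle here, since all the substance has been absorbed into Theorem \ref{StrongMetricBound}. The only points requiring a little care are the unconditional lower bound $M_\infty(\alpha)\ge 1$, and the observation that the elements $\gamma$ appearing in \eqref{SMZeroSetEq} have degree at most $[\rat(\alpha):\rat]$, which is exactly what makes a degree-uniform positive lower bound for $\log H(\gamma)$ available from \eqref{Dobrowolski} (or a finiteness statement from Northcott).
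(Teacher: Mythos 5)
Your proof is correct and follows the same route as the paper: the forward direction from $M(\alpha)=1$, and the converse via Theorem \ref{StrongMetricBound} together with either Dobrowolski or Northcott, exactly the two options the paper itself indicates. The one place where you add useful detail the paper leaves implicit is in noting that every $\gamma\in\rat(\alpha)$ has $\deg\gamma\le[\rat(\alpha):\rat]$, which is what turns Dobrowolski's degree-dependent bound into a uniform positive lower bound on $\log H(\gamma)$.
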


In view of Corollary \ref{SMZeroSet}, the map $(\alpha,\beta)\mapsto\log M_\infty(\alpha\beta^{-1})$ defines a metric on $\G$.  Like the
metric Mahler measure $M_1$, $M_\infty$ induces the discrete topology on $\G$ if and only if Lehmer's conjecture is true.

It is important to note that Corollary \ref{SMZeroSet} is trivial under the assumption of Lehmer's conjecture.  
Indeed, if $M_\infty(\alpha) = 1$ and $\alpha$ is not a root of unity, then whenever $\alpha$ is written as a product, some element
of the product must not be a root of unity.  Hence, we obtain a sequence of points $\alpha_n$, none of which are roots of unity, with
$M(\alpha_n)$ tending to $1$ as $n\to\infty$.  Of course, this would contradict the conclusion of Lehmer's conjecture.

We now give some additional basic properties about $M_\infty$.  Let $K$ be a number field and $\alpha\in K$.
For a rational prime $p$, we say that $\alpha$ is a {\it $p$-adic unit} if for every place $v$ dividing $p$, we have that $|\alpha|_v = 1$.
Of course, this definition does not depend on $K$.  Further, it is well-known that $\alpha$ is a $p$-adic unit if and only if $p$ divides neither the first
nor the last coefficient of the minimal polynomial of $\alpha$ over $\intg$.

\begin{thm} \label{BasicProperties}
	If $\alpha\in\algt$ and $r\in\rat^\times$ then $M_\infty(\alpha^r) = M_\infty(\alpha)$.  Moreover, if $p$ is the largest
	prime such that $\alpha$ fails to be a $p$-adic unit then $M_\infty(\alpha) \geq p$.
\end{thm}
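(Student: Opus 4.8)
The plan is to prove the two assertions in turn. For the first, I would begin by establishing the special case $M_\infty(\alpha^n) = M_\infty(\alpha)$ for every positive integer $n$, and then bootstrap to arbitrary $r \in \rat^\times$. To see $M_\infty(\alpha^n) \le M_\infty(\alpha)$, note that given any $(\alpha_1,\alpha_2,\ldots) \in \tau^{-1}(\alpha)$, the sequence obtained by listing each coordinate $\alpha_k$ exactly $n$ times lies in $\tau^{-1}(\alpha^n)$ — its product is $\prod_k \alpha_k^n = \alpha^n$ — and has exactly the same set of coordinate values, hence the same maximum Mahler measure; taking the infimum gives the bound. For the reverse inequality, start from $(\beta_1,\beta_2,\ldots) \in \tau^{-1}(\alpha^n)$, fix $n$-th roots $\gamma_k$ of the $\beta_k$ (with $\gamma_k = 1$ whenever $\beta_k = 1$), and observe that $\prod_k \gamma_k = \zeta\alpha$ for some root of unity $\zeta$, so that $(\zeta^{-1},\gamma_1,\gamma_2,\ldots) \in \tau^{-1}(\alpha)$. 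Since $H(\gamma_k)^n = H(\gamma_k^n) = H(\beta_k)$ and $\deg\gamma_k \le n\deg\beta_k$, and since $H(\gamma_k) \ge 1$, we get $M(\gamma_k) = H(\gamma_k)^{\deg\gamma_k} \le H(\beta_k)^{\deg\beta_k} = M(\beta_k)$; as $M(\zeta^{-1}) = 1$ as well, the maximum does not increase, and taking the infimum yields $M_\infty(\alpha) \le M_\infty(\alpha^n)$.

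To pass to general $r$, I would first record the elementary identity $M(\beta^{-1}) = M(\beta)$, which follows from $\deg\beta^{-1} = \deg\beta$ together with $H(\beta^{-1}) = H(\beta)$ (a consequence of the product formula, since $\max\{1,|\beta|_v^{-1}\} = |\beta|_v^{-1}\max\{1,|\beta|_v\}$); factoring termwise then gives $M_\infty(\alpha^{-1}) = M_\infty(\alpha)$. Also, $M_\infty$ is unchanged under multiplication by a root of unity, because $M_\infty(\zeta) = 1$ and $M_\infty$ satisfies the strong submultiplicativity $M_\infty(xy) \le \max\{M_\infty(x),M_\infty(y)\}$. Now for $r = \pm a/b$ with $a,b$ positive integers, writing $\delta = \alpha^r$ we have $\delta^b = \alpha^{\pm a}$ up to a root of unity, so $M_\infty(\delta) = M_\infty(\delta^b) = M_\infty(\alpha^{\pm a}) = M_\infty(\alpha^a) = M_\infty(\alpha)$ by the three facts just recorded, completing the first assertion.

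For the second assertion, fix any $(\alpha_1,\alpha_2,\ldots) \in \tau^{-1}(\alpha)$ and choose a number field $K$ containing $\alpha$ and all of the finitely many $\alpha_k \ne 1$. Since $\alpha$ fails to be a $p$-adic unit, there is a place $w$ of $K$ with $w \mid p$ and $|\alpha|_w \ne 1$; as $|\cdot|_w$ is multiplicative and $\alpha = \prod_k \alpha_k$ is a finite product, some factor $\alpha_{k_0}$ has $|\alpha_{k_0}|_w \ne 1$, and hence $\alpha_{k_0}$ is not a $p$-adic unit. The heart of the matter is then the claim that any algebraic number $\beta$ which is not a $p$-adic unit satisfies $M(\beta) \ge p$. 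To prove it, pass to $L = \rat(\beta)$ and choose a place $u \mid p$ of $L$ with $|\beta|_u > 1$ — replacing $\beta$ by $\beta^{-1}$ if necessary, which preserves the non-$p$-adic-unit property and leaves $M(\beta)$ unchanged by the identity above; then $\|\beta\|_u \ge p^{1/e}$, where $e$ is the ramification index, so $|\beta|_u = \|\beta\|_u^{[L_u:\rat_p]/[L:\rat]} \ge p^{f/[L:\rat]}$ with $f$ the residue degree (using $[L_u:\rat_p] = ef$), and therefore $M(\beta) = H(\beta)^{\deg\beta} \ge \max\{1,|\beta|_u\}^{[L:\rat]} \ge p^f \ge p$. (Alternatively, one may invoke the classical formula $M(\beta) = |A|\prod_n\max\{1,|\beta_n|\}$, in which both $|A|$ and $|A\prod_n\beta_n|$ — the leading and constant coefficients of the minimal polynomial over $\intg$ — are at most $M(\beta)$, while one of these two nonzero integers is divisible by $p$.) Applying the claim with $\beta = \alpha_{k_0}$ gives $\max_k M(\alpha_k) \ge M(\alpha_{k_0}) \ge p$, and since the point of $\tau^{-1}(\alpha)$ was arbitrary, taking the infimum in \eqref{StrongMetricMM} proves $M_\infty(\alpha) \ge p$. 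The only genuinely delicate step is this last claim; everything else is routine bookkeeping with the normalized absolute values, and the main thing to get right is the relation $[L_u:\rat_p] = ef$ in the estimate above.
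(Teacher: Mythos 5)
Your proof is correct, and the treatment of the second assertion (that $M(\beta) \ge p$ whenever $\beta$ fails to be a $p$-adic unit, combined with the observation that multiplicativity of $|\cdot|_w$ forces some factor of any decomposition of $\alpha$ to share this failure) is essentially identical to the paper's, down to the computation with ramification index, residue degree, and the normalization exponent $[L_u:\rat_p]/[L:\rat]$. You avoid the paper's $\varepsilon$-and-limit phrasing by simply bounding the max over each decomposition below by $p$ and then taking the infimum, which is a minor streamlining.

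For the first assertion, however, your route is genuinely different from the paper's. The paper proves a technical lemma (Lemma \ref{Degree}) which rewrites $M_\infty(\alpha)$ as $\inf\{\max_n H(\alpha_n)^{d(\alpha_n)}\}$ with $d(\alpha) = \min_\zeta \deg(\zeta\alpha)$ a quantity well-defined on $\G = \algt/\tor(\algt)$, then uses the inequality $d(\alpha) \le r\,d(\alpha^r)$ together with the fact that $g_r(\alpha) = \alpha^r$ is an automorphism of $\G$ to pivot the infimum from decompositions of $\alpha$ to decompositions of $\alpha^r$. You instead work entirely with the original definition and construct the decompositions explicitly: replicating coordinates $n$ times gives one inequality, and taking $n$-th roots coordinatewise gives the other, using $H(\gamma_k)^n = H(\beta_k)$ and $\deg\gamma_k \le n\deg\beta_k$ to control $M(\gamma_k)$, while the stray root of unity from taking roots is absorbed into a free coordinate. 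The core observation — that an $n$-th root of $\beta$ has height $H(\beta)^{1/n}$ and degree at most $n\deg\beta$, so its Mahler measure does not exceed $M(\beta)$ — is exactly the content of the paper's lemma, but your version dispenses with the $d(\cdot)$ normalization and the automorphism pivot, making the argument more self-contained and elementary; the paper's version buys a cleaner formal statement on $\G$ that it reuses in its setup. Both are valid, and your extension from positive integers to $r \in \rat^\times$ via inversion-invariance of $M$ and the torsion quotient is the same finishing step the paper uses.
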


In general, there is ambiguity in writing $\alpha^r$ for $r\in\rat$ because there may be many $r$th powers of $\alpha$.  
However, all such powers lie in the same coset of $\tor(\algt)$ in $\algt$.  It is obvious that $M_\infty$ is invariant
under mulitplication by a root of unity so these elements must all have the same value.  Theorem \ref{BasicProperties} further implies that 
$M_\infty$ defines a projective height on $\G$ when it is viewed as a vector space, written multiplicatively, over $\rat$.  This vector space 
is studied extensively in \cite{AV}, in which it is noted that, among other things, the Weil height defines a norm with respect to the usual 
absolute value on $\rat$.

As an example of the second statement of Theorem \ref{BasicProperties}, consider the algebraic number $\gamma = 1 + \sqrt 5$.  It is computed easily
that $\gamma$ has minimial polynomial $x^2 - 2x - 4\in \intg[x]$ so that $\gamma$ fails to be a $2$-adic unit but is a $p$-adic unit for all primes $p > 2$.
In this case, Theorem \ref{BasicProperties} yields the bound $M_\infty(\gamma) \geq 2$.  As another basic example, if $\alpha$ is rational then $M_\infty(\alpha)$
is bounded below by the largest prime that divides its numerator or denominator.  In fact, we may apply 
Theorem \ref{BasicProperties} to compute precisely the value of the strong metric Mahler measure at any surd.

\begin{cor} \label{Surd}
  If $\alpha$ is rational and $d$ is a positive integer then $M_\infty(\alpha^{1/d})$ equals the largest prime dividing
  the numerator or denominator of $\alpha$.
\end{cor}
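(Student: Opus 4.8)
The plan is to reduce the assertion to the computation of $M_\infty$ on $\rat^\times$ and then squeeze the value between the two halves of Theorem \ref{BasicProperties}. First, the first statement of Theorem \ref{BasicProperties}, applied with the exponent $r = 1/d$, gives $M_\infty(\alpha^{1/d}) = M_\infty(\alpha)$ (recall that $\alpha^{1/d}$ is only defined up to a root of unity, but $M_\infty$ does not see this). So it is enough to show that $M_\infty(\alpha) = p$, where $p$ denotes the largest prime dividing the numerator or denominator of $\alpha$. Multiplying $\alpha$ by $-1$ changes neither $M_\infty(\alpha)$ nor the set of primes dividing the numerator or denominator, so I may assume $\alpha = a/b$ with $a,b$ positive and coprime; if $\alpha = 1$ there is nothing to prove, so assume $\alpha \neq 1$.

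For the lower bound, observe that $bx - a$ is the minimal polynomial of $\alpha$ over $\intg$, so by the criterion recalled just before Theorem \ref{BasicProperties}, $\alpha$ fails to be a $q$-adic unit exactly when $q$ divides $ab$. In particular $p$ is the largest prime for which $\alpha$ is not a $p$-adic unit, and the second statement of Theorem \ref{BasicProperties} yields $M_\infty(\alpha) \geq p$.

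For the upper bound, I would factor $\alpha$ into prime powers, say $\alpha = \prod_i p_i^{e_i}$ with $e_i \in \intg$, and then break each $p_i^{e_i}$ into $|e_i|$ equal factors, each equal to $p_i$ or $p_i^{-1}$ according to the sign of $e_i$. Listing these finitely many rational prime powers (and padding with $1$'s) produces a tuple in $\X(\algt)$ mapping to $\alpha$ under $\tau$. Since each such factor is rational, $M(p_i^{\pm 1}) = H(p_i^{\pm 1}) = p_i$, and every $p_i$ divides $ab$, so $p_i \leq p$; moreover $p$ itself occurs among the $p_i$ because $p \mid ab$. Hence the maximum of the Mahler measures of the coordinates of this tuple is exactly $p$, which gives $M_\infty(\alpha) \leq p$. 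Combining this with the lower bound proves $M_\infty(\alpha) = p$, and in fact shows that the infimum defining $M_\infty(\alpha)$ is attained.

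I do not expect a serious obstacle: the corollary simply packages the two parts of Theorem \ref{BasicProperties} together with the elementary evaluation of $H$ on rational prime powers. The only points that need a little care are the legitimacy of using the rational exponent $1/d$ in the reduction, the trivial case $\alpha = \pm 1$, and verifying that the explicit factorization used for the upper bound genuinely lies in $\tau^{-1}(\alpha)$ with all but finitely many coordinates equal to $1$.
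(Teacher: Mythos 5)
Your proof is correct and takes essentially the same route as the paper: reduce $M_\infty(\alpha^{1/d})$ to $M_\infty(\alpha)$ via the first part of Theorem \ref{BasicProperties}, obtain the lower bound $M_\infty(\alpha)\geq p$ from the second part, and obtain the upper bound $M_\infty(\alpha)\leq p$ by factoring $\alpha$ into prime powers. The only cosmetic difference is that you exhibit an explicit element of $\tau^{-1}(\alpha)$ built from single primes $p_i^{\pm1}$, whereas the paper applies the strong triangle inequality to the factors $q_n^{r_n}$ and uses $M_\infty(q_n^{r_n})=M_\infty(q_n)\leq q_n$; both yield the same bound.
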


It is worth noting that Corollary \ref{Surd} identifies a large class of cases of equality in the second statement of Theorem \ref{BasicProperties}.
Indeed, the primes dividing the numerator and denominator of $\alpha$ are the same as the primes such that $\alpha^{1/d}$ fails to be
a $p$-adic unit.

\section{Heights on Abelian groups} \label{GroupHeight}

The method used to construct \eqref{DSMetricMM} and \eqref{StrongMetricMM} is applicable on any abelian group with a function
satisfying only a few simple properties.  Although we cannot hope to prove anything particularly deep in such a general setting,
it is worth exploring the basic facts before we prove our main results.

Let $G$ be an abelian group written multiplicatively.  We say that $\rho:G\to[1,\infty)$ is a {\it height on $G$} if
the conditions
\begin{enumerate}[(i)]
  \item $\rho(1) = 1$
  \item $\rho(\alpha) = \rho(\alpha^{-1})$
\end{enumerate}
are satisfied.  We define the {\it zero set of $\rho$} to be 
\begin{equation*}
  Z(\rho) = \{\alpha\in G: \rho(\alpha) = 1\}.
\end{equation*}
We further say that $\rho$ is a {\it metric height on $G$} if we have that
$$\rho(\alpha\beta)\leq \rho(\alpha)\rho(\beta)$$ for all $\alpha,\beta\in G$.  If $\rho$ satisfies the stronger condition
that $$\rho(\alpha\beta)\leq\max\{\rho(\alpha),\rho(\beta)\}$$ for all $\alpha,\beta\in G$ then we say that $\rho$ is a 
{\it strong} (or {\it non-Archimedean}) {\it metric height on $G$}.  If $\sigma$ is another height on $G$ then we write $\sigma\leq\rho$ if
$\sigma(\alpha) \leq \rho(\alpha)$ for all $\alpha\in G$.  This yields a partial ordering of the set of all 
heights on $G$.

As we noted in the introduction, 
Dubickas and Smyth \cite{DubSmythLength, DubSmyth, DubSmyth2} studied several heights and metric heights on the group of algebraic 
numbers $\algt$.  More specifically, they defined and studied the metric heights associated to the Mahler measure, the na\"ive height, and the length. 
Our first proposition generalizes several facts noted by Dubickas and Smyth regarding metric heights.
The proof is only trivially different from several remarks made in \cite{DubSmyth} and \cite{DubSmyth2},
however, we include it here for the purposes of completeness.

\begin{prop}\label{MetricHeightProperties}
  Suppose that $\rho$ is a metric height on the abelian group $G$.  Then
  \begin{enumerate}[(i)]
  \item $Z(\rho)$ is a subgroup of $G$.
  \item\label{Defined}$\rho(\alpha) = \rho(\zeta\alpha)$ for all $\alpha\in G$ and $\zeta\in Z(\rho)$.  
    That is, $\rho$ is well-defined on $G/Z(\rho)$.
  \item\label{Metric}The map $(\alpha,\beta) \mapsto \log \rho(\alpha\beta^{-1})$ defines a metric on $G/Z(\rho)$.
  \end{enumerate}
\end{prop}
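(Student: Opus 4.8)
The plan is to verify the three statements in order, using only the defining properties (i) $\rho(1)=1$, (ii) $\rho(\alpha)=\rho(\alpha^{-1})$, and the metric height inequality $\rho(\alpha\beta)\leq\rho(\alpha)\rho(\beta)$. For part (i), I would show $Z(\rho)$ is closed under multiplication and inverses. Closure under multiplication: if $\rho(\alpha)=\rho(\beta)=1$ then $1\leq\rho(\alpha\beta)\leq\rho(\alpha)\rho(\beta)=1$, using that $\rho$ maps into $[1,\infty)$. Closure under inverses is immediate from (ii), and $1\in Z(\rho)$ by (i). So $Z(\rho)$ is a subgroup.

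For part (ii), given $\zeta\in Z(\rho)$ I would apply the metric inequality twice: $\rho(\zeta\alpha)\leq\rho(\zeta)\rho(\alpha)=\rho(\alpha)$, and conversely $\rho(\alpha)=\rho(\zeta^{-1}\zeta\alpha)\leq\rho(\zeta^{-1})\rho(\zeta\alpha)=\rho(\zeta\alpha)$ since $\zeta^{-1}\in Z(\rho)$ by part (i). Hence $\rho(\zeta\alpha)=\rho(\alpha)$, so $\rho$ descends to a well-defined function on the quotient $G/Z(\rho)$.

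For part (iii), write $d(\alpha,\beta)=\log\rho(\alpha\beta^{-1})$; by part (ii) this is well-defined on cosets. Non-negativity follows since $\rho\geq 1$. For the separation axiom, $d(\alpha,\beta)=0$ means $\rho(\alpha\beta^{-1})=1$, i.e. $\alpha\beta^{-1}\in Z(\rho)$, which is exactly the statement that $\alpha$ and $\beta$ lie in the same coset. Symmetry follows from (ii): $\rho(\alpha\beta^{-1})=\rho((\alpha\beta^{-1})^{-1})=\rho(\beta\alpha^{-1})$. The triangle inequality follows by writing $\alpha\gamma^{-1}=(\alpha\beta^{-1})(\beta\gamma^{-1})$ and applying the metric height inequality together with the fact that $\log$ is monotone: $d(\alpha,\gamma)=\log\rho(\alpha\gamma^{-1})\leq\log\bigl(\rho(\alpha\beta^{-1})\rho(\beta\gamma^{-1})\bigr)=d(\alpha,\beta)+d(\beta,\gamma)$.

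None of the steps presents a genuine obstacle — everything is a direct unwinding of the axioms. The only point requiring a little care is ensuring that part (ii) is invoked before part (iii) so that $d$ is legitimately a function on the quotient, and recognizing that the quotient is precisely what makes the separation axiom hold. The mild subtlety in part (i) is remembering to use $\rho\geq 1$ (not just the submultiplicative bound) to force $\rho(\alpha\beta)=1$.
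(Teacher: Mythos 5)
Your proposal is correct and follows essentially the same approach as the paper: the same closure argument for $Z(\rho)$, the same two-sided squeeze for part (ii), and the standard verification of the metric axioms for part (iii). The only difference is that the paper dispatches part (iii) in one sentence while you spell out the four axioms explicitly.
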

\begin{proof}
  If $\rho(\alpha) = \rho(\beta) = 1$ then we know that $\rho(\alpha\beta) \leq \rho(\alpha)\rho(\beta) = 1$.
  By definition of height we conclude that $Z(\rho)$ is indeed a subgroup of $G$.
  If $\zeta\in Z(\rho)$ then we have that
  \begin{equation*}
    \rho(\alpha) = \rho(\zeta^{-1}\zeta\alpha) \leq \rho(\zeta^{-1})\rho(\zeta\alpha) = \rho(\zeta\alpha)
    \leq \rho(\zeta)\rho(\alpha) = \rho(\alpha)
  \end{equation*}
  which establishes that $\rho(\alpha) = \rho(\zeta\alpha)$.
  The final statement of the proposition follows from the triangle inequality.
\end{proof}

Of course, Proposition \ref{MetricHeightProperties} justifies our use of the word metric in the definition of 
metric height: although $\rho$ does not necessarily define a metric on $G$, it is indeed a well defined metric on the 
quotient $G/Z(\rho)$.  Thus it is important to identify the subgroup $Z(\rho)$ if we hope to fully understand a 
metric height $\rho$.

If we are given a height $\rho$ on $G$ it is possible to construct both a natural metric height and a natural
strong metric height from $\rho$.  Let
\begin{equation*}
  \X(G) = \{(\alpha_1,\alpha_2,\ldots):\alpha_n\in G,\ \alpha_n=1\ \mathrm{for\ a.e.}\ n\}.
\end{equation*}
Further, define the map $\tau:\X(G)\to G$ by $\tau(\alpha_1,\alpha_2,\ldots) = \alpha_1\alpha_2\cdots$
and note that $\tau$ is a surjective group homomorphism.  As is done in \cite{DubSmyth} and \cite{DubSmyth2}
using the Mahler measure and na\"ive height, we define
\begin{equation}\label{MetricHeightDef}
  \rho_1(\alpha) = \inf\left\{\prod_{n=1}^\infty \rho(\alpha_n):(\alpha_1,\alpha_2,\ldots) \in \tau^{-1}(\alpha)\right\}
\end{equation}
and note that the map $\rho\mapsto\rho_1$ preseves the partial ordering of heights on $G$.  In other words,
if $\rho$ and $\sigma$ are heights on $G$ with $\sigma\leq\rho$ then $\sigma_1\leq \rho_1$.
Now we establish a modification of the results of \cite{DubSmyth} and \cite{DubSmyth2}.

\begin{thm} \label{MetricConstruction}
  If $\rho$ is a height on $G$ then the following hold.
  \begin{enumerate}[(i)]
  \item\label{MetricHeightConversion} $\rho_1$ is a metric height on $G$ with $\rho_1\leq\rho$.
  \item\label{BestMetricHeight} If $\sigma$ is a metric height with $\sigma\leq\rho$ then 
    $\sigma\leq \rho_1$.
  \item\label{NoChangeMetric} $\rho = \rho_1$ if and only if $\rho$ is metric height.
  \item\label{NoDoubleMetric} $(\rho_1)_1 = \rho_1$.
  \end{enumerate}
\end{thm}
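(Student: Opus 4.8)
The plan is to establish parts (i) and (ii) directly from the definition \eqref{MetricHeightDef} of $\rho_1$, and then to deduce (iii) and (iv) from them as purely formal consequences. A useful preliminary observation is that every tuple in $\X(G)$ has only finitely many entries different from $1$, so each quantity $\prod_{n=1}^\infty \rho(\alpha_n)$ appearing in \eqref{MetricHeightDef} is in fact a finite product of real numbers in $[1,\infty)$; in particular it lies in $[1,\infty)$, and hence so does $\rho_1(\alpha)$.

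For (i) I would first verify the height axioms. Testing \eqref{MetricHeightDef} against the tuple $(\alpha,1,1,\dots)\in\tau^{-1}(\alpha)$ gives $\rho_1(\alpha)\le\rho(\alpha)$, and applying this with $\alpha=1$ together with the lower bound $\rho_1(1)\ge 1$ from the preliminary observation forces $\rho_1(1)=1$. The identity $\rho_1(\alpha)=\rho_1(\alpha^{-1})$ follows because $(\alpha_1,\alpha_2,\dots)\mapsto(\alpha_1^{-1},\alpha_2^{-1},\dots)$ is a bijection of $\tau^{-1}(\alpha)$ onto $\tau^{-1}(\alpha^{-1})$ which, by axiom (ii) for $\rho$, preserves $\prod_n\rho(\alpha_n)$, so the two infima agree. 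The substantive point is the metric inequality $\rho_1(\alpha\beta)\le\rho_1(\alpha)\rho_1(\beta)$: given $\varepsilon>0$, choose $(\alpha_n)\in\tau^{-1}(\alpha)$ and $(\beta_n)\in\tau^{-1}(\beta)$ with $\prod_n\rho(\alpha_n)\le(1+\varepsilon)\rho_1(\alpha)$ and $\prod_n\rho(\beta_n)\le(1+\varepsilon)\rho_1(\beta)$, and interleave them into the tuple $\gamma\in\X(G)$ given by $\gamma_{2n-1}=\alpha_n$, $\gamma_{2n}=\beta_n$. Since $G$ is abelian and all but finitely many entries are trivial, $\tau(\gamma)=\alpha\beta$, while $\prod_n\rho(\gamma_n)=\big(\prod_n\rho(\alpha_n)\big)\big(\prod_n\rho(\beta_n)\big)\le(1+\varepsilon)^2\rho_1(\alpha)\rho_1(\beta)$. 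Letting $\varepsilon\to 0$ gives the claim, and $\rho_1\le\rho$ was already shown.

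For (ii), let $\sigma$ be a metric height with $\sigma\le\rho$. Fix $\alpha\in G$ and any $(\alpha_n)\in\tau^{-1}(\alpha)$; choosing $N$ so that $\alpha_n=1$ for $n>N$, we have $\alpha=\alpha_1\cdots\alpha_N$, and iterating the defining inequality of a metric height gives $\sigma(\alpha)\le\prod_{n=1}^N\sigma(\alpha_n)\le\prod_{n=1}^N\rho(\alpha_n)=\prod_{n=1}^\infty\rho(\alpha_n)$. Taking the infimum over all such tuples yields $\sigma(\alpha)\le\rho_1(\alpha)$.

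Parts (iii) and (iv) then follow formally. If $\rho=\rho_1$ then $\rho$ is a metric height by (i); conversely, if $\rho$ is a metric height then $\sigma=\rho$ is eligible in (ii), giving $\rho\le\rho_1$, which together with $\rho_1\le\rho$ from (i) yields $\rho=\rho_1$; this is (iii). Finally, $\rho_1$ is a metric height by (i), so applying (iii) with $\rho_1$ in place of $\rho$ gives $(\rho_1)_1=\rho_1$, which is (iv). The only step requiring genuine care is the interleaving argument in (i) — in particular checking that the rearrangement of the product is harmless, which it is because only finitely many factors differ from $1$ — and I do not anticipate any real obstacle.
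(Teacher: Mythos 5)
Your proof is correct and follows essentially the same route as the paper's: verify the height axioms and $\rho_1\le\rho$ directly, prove the triangle inequality by combining near-optimal factorizations of $\alpha$ and $\beta$, establish (ii) by iterating the metric inequality for $\sigma$ and comparing term-by-term with $\rho$, and then deduce (iii) and (iv) as formal consequences. The only cosmetic difference is that you spell out the $\varepsilon$-argument and the interleaving more explicitly than the paper does, but the underlying ideas are identical.
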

\begin{proof}
  It is obvious that $\rho_1(\alpha)\geq 1$ for all $\alpha\in G$ and that $\rho_1(1) = 1$.  Since
  $\alpha\mapsto\alpha^{-1}$ is an automorphism of $G$ and $\rho(\alpha) = \rho(\alpha^{-1})$ for all
  $\alpha\in G$, it is also clear that $\rho_1(\alpha) = \rho_1(\alpha^{-1})$.
  Since $\tau(\alpha,1,1,\ldots) = \alpha$, we have that $\rho_1\leq\rho$ as well.  To prove the triangle inequality
  for $\rho_1$ we observe that
  \begin{align*}
    \rho_1(\alpha\beta) & 
    = \inf\left\{\prod_{n=1}^\infty \rho(\alpha_n): (\alpha_1,\alpha_2,\ldots) \in \tau^{-1}(\alpha\beta)\right\} \\
    & \leq \inf\left\{\prod_{n=1}^\infty \rho(\alpha_n)\prod_{m=1}^\infty \rho(\beta_m): 
      (\alpha_1,\alpha_2,\ldots) \in \tau^{-1}(\alpha), (\beta_1,\beta_2,\ldots) \in \tau^{-1}(\beta)\right\} \\
    & = \rho_1(\alpha)\rho_1(\beta)
  \end{align*}
  which establishes \eqref{MetricHeightConversion}.

  To prove \eqref{BestMetricHeight}, we observe that
  \begin{align*}
    \rho_1(\alpha) & = \inf\left\{\prod_{n=1}^\infty \rho(\alpha_n): (\alpha_1,\alpha_2,\ldots) \in \tau^{-1}(\alpha)\right\}\\
    & \geq \inf\left\{\prod_{n=1}^\infty \sigma(\alpha_n): (\alpha_1,\alpha_2,\ldots) \in \tau^{-1}(\alpha)\right\}\\
    & \geq \sigma(\alpha)
  \end{align*}
  where the last inequality follows from the triangle inequality for $\sigma$.

   Obviously if $\rho = \rho_1$ then $\rho$ is a metric height.   To prove the converse, we asssume that
   $\rho$ is a metric height.  Hence,  statement \eqref{BestMetricHeight} implies that
   $\rho\leq\rho_1\leq\rho$ which yields our result.  The final statement follows immediately since
   $\rho_1$ is itself a height.
\end{proof}

Indeed, Theorem \ref{MetricConstruction} indicates that the definition \eqref{MetricHeightDef} is a 
natural way of constructing a metric height out of an oridinary height.  Not only do we obtain a metric height,
but we obtain the largest metric height that is less than or equal to $\rho$.  Furthermore, we need not attempt this 
construction with a height that is already known to be metric.  For example, the Weil height
$H$ on $\algt$ is already a metric height so that applying \eqref{MetricHeightDef} yields the Weil height again.
The Mahler measure of an algebraic number $\alpha$, however, does not have the triangle inequality, so this leads to the
non-trivial construction studied in \cite{DubSmyth2}.

We now turn our attention to a non-Archimedean version of \eqref{MetricHeightDef}.  
Once again, we assume that $\rho$ is a height on $G$ and define
\begin{equation} \label{StrongMetricHeightDef}
  \rho_\infty(\alpha) = \inf\left\{\max_{n\geq 1}\left\{\rho(\alpha_n)\right\}: 
    (\alpha_1,\alpha_2,\ldots) \in\tau^{-1}(\alpha)\right\}
\end{equation}
so that the product in \eqref{MetricHeightDef} is replaced with a maximum.  As in the construction of $\rho_1$,
we observe that the strong metric contruction preserves the partial ordering of heights on $G$.  
We further note an analogue of Theorem \ref{MetricConstruction} for $\rho_\infty$.

\begin{thm} \label{StrongMetricConstruction}
  If $\rho$ is a height on $G$ then the following hold.
  \begin{enumerate}[(i)]
  \item\label{SMetricHeightConversion} $\rho_\infty$ is a strong metric height on $G$ with $\rho_\infty\leq\rho_1$.
  \item\label{SBestMetricHeight} If $\sigma$ is a strong metric height with $\sigma\leq\rho$ then 
    $\sigma\leq \rho_\infty$.
  \item\label{SNoChangeMetric} $\rho = \rho_\infty$ if and only if $\rho$ is a strong metric height.
  \item\label{SNoDoubleMetric} $\rho_\infty = (\rho_1)_\infty = (\rho_\infty)_1 = (\rho_\infty)_\infty$.
  \end{enumerate}
\end{thm}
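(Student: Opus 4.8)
The plan is to prove Theorem~\ref{StrongMetricConstruction} in close parallel with the proof of Theorem~\ref{MetricConstruction}, substituting the maximum for the product throughout. First I would establish \eqref{SMetricHeightConversion}: the facts $\rho_\infty(\alpha) \geq 1$, $\rho_\infty(1) = 1$, and $\rho_\infty(\alpha) = \rho_\infty(\alpha^{-1})$ are immediate exactly as before, and $\rho_\infty \leq \rho$ follows from $\tau(\alpha,1,1,\ldots) = \alpha$. For the strong triangle inequality, given $(\alpha_n) \in \tau^{-1}(\alpha)$ and $(\beta_m) \in \tau^{-1}(\beta)$, I would interleave the two sequences into a single element of $\tau^{-1}(\alpha\beta)$; the maximum of $\rho$ over the interleaved sequence is $\max\{\max_n \rho(\alpha_n), \max_m \rho(\beta_m)\}$, and taking the infimum over both factorizations gives $\rho_\infty(\alpha\beta) \leq \max\{\rho_\infty(\alpha), \rho_\infty(\beta)\}$. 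The bound $\rho_\infty \leq \rho_1$ comes from $\max_n \rho(\alpha_n) \leq \prod_n \rho(\alpha_n)$ since each $\rho(\alpha_n) \geq 1$, applied inside the infimum defining $\rho_1$.

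Next I would prove \eqref{SBestMetricHeight}: if $\sigma$ is a strong metric height with $\sigma \leq \rho$, then for any $(\alpha_n) \in \tau^{-1}(\alpha)$ we have $\sigma(\alpha) = \sigma(\prod_n \alpha_n) \leq \max_n \sigma(\alpha_n) \leq \max_n \rho(\alpha_n)$, where the first inequality is a finite induction on the strong triangle inequality for $\sigma$ (the product is really finite). Taking the infimum over factorizations yields $\sigma(\alpha) \leq \rho_\infty(\alpha)$. Statement \eqref{SNoChangeMetric} is then formal: if $\rho = \rho_\infty$ then $\rho$ is a strong metric height by \eqref{SMetricHeightConversion}, and conversely if $\rho$ is a strong metric height, applying \eqref{SBestMetricHeight} with $\sigma = \rho$ gives $\rho \leq \rho_\infty$, while $\rho_\infty \leq \rho$ always holds.

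For \eqref{SNoDoubleMetric} I would argue that all four quantities coincide by a squeezing argument using the characterization of $\rho_\infty$ as the largest strong metric height below $\rho$. Since $\rho_\infty \leq \rho_1 \leq \rho$ and $\rho_\infty \leq \rho$, and since $\rho_\infty$ is itself a strong metric height, applying \eqref{SNoChangeMetric} to $\rho_\infty$ gives $(\rho_\infty)_\infty = \rho_\infty$. For $(\rho_1)_\infty$: we have $(\rho_1)_\infty \leq \rho_1 \leq \rho$, so $(\rho_1)_\infty$ is a strong metric height below $\rho$, hence $(\rho_1)_\infty \leq \rho_\infty$ by \eqref{SBestMetricHeight}; conversely $\rho_\infty \leq \rho_1$ and the strong metric construction preserves the ordering, so $\rho_\infty = (\rho_\infty)_\infty \leq (\rho_1)_\infty$, giving equality. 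Finally for $(\rho_\infty)_1$: since $\rho_\infty$ is already a strong metric height it is in particular a metric height, so by part \eqref{NoChangeMetric} of Theorem~\ref{MetricConstruction} we get $(\rho_\infty)_1 = \rho_\infty$.

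The main obstacle, such as it is, is purely bookkeeping: one must be careful that the interleaved sequence used for the triangle inequality genuinely lies in $\X(G)$ (only finitely many nonidentity terms) and that $\tau$ of it equals $\alpha\beta$, and one must correctly invoke the order-preservation of the $(\cdot)_\infty$ construction in the chain of inequalities for \eqref{SNoDoubleMetric}. No deep input is needed beyond Theorems~\ref{MetricConstruction} and the definitions; the argument is essentially a transcription of the $\rho_1$ case with $\prod$ replaced by $\max$.
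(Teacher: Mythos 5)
Your proposal is correct and follows the same plan as the paper: parts \eqref{SMetricHeightConversion}--\eqref{SNoChangeMetric} mirror the proof of Theorem~\ref{MetricConstruction} with products replaced by maxima (details the paper simply omits as ``nearly identical''), and part \eqref{SNoDoubleMetric} squeezes $(\rho_1)_\infty$ between $\rho_\infty$ on both sides using order-preservation of $(\cdot)_\infty$ together with the idempotence $(\rho_\infty)_\infty=\rho_\infty$ from \eqref{SNoChangeMetric}, finishing with Theorem~\ref{MetricConstruction}\eqref{NoChangeMetric} for $(\rho_\infty)_1$. Your variant of deriving $(\rho_1)_\infty\leq\rho_\infty$ via \eqref{SBestMetricHeight} rather than directly from order-preservation applied to $\rho_1\leq\rho$ is an equivalent bookkeeping choice.
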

\begin{proof}
  The proofs of statements \eqref{SMetricHeightConversion}, \eqref{SBestMetricHeight} and 
  \eqref{SNoChangeMetric} are nearly identical to proofs of the analogous statements in Theorem 
  \ref{MetricConstruction} so we do not include them here.  To verify \eqref{SNoDoubleMetric} we note that
  \begin{equation*}
    \rho_\infty \leq \rho_1 \leq \rho.
  \end{equation*}
  Since these inequalities are preseved by taking the strong metric height of each component, we obtain
  \begin{equation*}
    (\rho_\infty)_\infty \leq (\rho_1)_\infty \leq \rho_\infty.
  \end{equation*}
  But it is clear from the \eqref{SNoChangeMetric} that $(\rho_\infty)_\infty = \rho_\infty$ so that
  \begin{equation*}
    (\rho_\infty)_\infty = (\rho_1)_\infty = \rho_\infty.
  \end{equation*}
  Finally, we note that $\rho_\infty$ is certainly a metric height so that $(\rho_\infty)_1 = \rho_\infty$ by
  Theorem \ref{MetricConstruction} \eqref{NoChangeMetric}.
\end{proof}

Theorem \ref{StrongMetricConstruction} implies that $\rho_\infty$ is indeed a metric height
as well so that we may apply Proposition \ref{MetricHeightProperties} to it.  The metric induced by $\rho_\infty$
on $G/Z(\rho_\infty)$ is non-Archimedean, so every open or closed ball centered at $1$ is a subgroup of 
$G/Z(\rho_\infty)$.  Furthermore, for any $r\geq 1$, we set
\begin{equation*}
  B_r = \{\alpha\in G: \rho_\infty(\alpha) < r \}
\end{equation*}
and let $S_r$ be the subgroup of $G$ generated by the set $\{\alpha\in G: \rho(\alpha) < r\}$.
It is clear that
\begin{equation*}
  S_r = \{\tau(\alpha_1,\alpha_2,\ldots): (\alpha_1,\alpha_2,\ldots)\in\X(G)\ \mathrm{and}\
  \rho(\alpha_n) < r\ \mathrm{for\ all}\ n\}.
\end{equation*}
If $\alpha\in S_r$ then $\alpha = \prod_{n=1}^\infty\alpha_n$ where $\rho(\alpha_n) < r$ for all $n$.
Hence, 
\begin{equation*}
  \rho_\infty(\alpha) \leq \max_{n\geq 1}\{\rho(\alpha_n)\} < r
\end{equation*}
so that $\alpha\in B_r$.  To establish the opposite containment, note that if $\alpha\in B_r$ then 
$\rho_\infty(\alpha) < r$.  Therefore, by definition of 
$\rho_\infty$ there exists $(\alpha_1,\alpha_2,\ldots) \in \tau^{-1}(\alpha)$ such that
$\max_{n\geq 1}\{\rho(\alpha_n)\} < r$.  It follows that $\alpha\in S_r$ and we have shown that
\begin{equation} \label{OpenBall}
  B_r = S_r.
\end{equation}
It is worth noting that there is no analog of \eqref{OpenBall} for closed balls unless the infimum in
$\rho_\infty$ is always achieved on the boundary of the ball.  
If $\rho_\infty(\alpha) = r$ then we may simply conclude that for every $\varepsilon > 0$ there
exists $(\alpha_1,\alpha_2,\ldots) \in \tau^{-1}(\alpha)$ such that
$\max_{n\geq 1}\{\rho(\alpha_n)\} < r + \varepsilon$.  However, one cannot conclude that 
$\max_{n\geq 1}\{\rho(\alpha_n)\} \leq r$.

As an example of \eqref{StrongMetricHeightDef},  we note that the Weil height $H$ does not already have 
the strong triangle inequality.  Therefore, we may find it interesting to apply \eqref{StrongMetricHeightDef} 
to it.  However, we quickly realize that if $\alpha\in\algt$ then we may write $\alpha = (\alpha^{1/n})^n$
so that $H_\infty(\alpha)\leq H(\alpha^{1/n}) = H(\alpha)^{1/n}$.  But, $H(\alpha)^{1/n}$ tends to $1$ as $n$
tends to $\infty$ implying that $H_\infty$ is trivial.  Of course, Corollary \ref{SMZeroSet} establishes that
$M_\infty$ is non-trivial.

\section{Proofs of our main results} \label{Proofs}

Before we prove Theorem \ref{StrongMetricBound} we recall the relevant definitions and notation.  Suppose that $K/F$ is any finite Galois 
extension of fields and let $G = \mathrm{Aut}(K/F)$.  Recall that the {\it norm from $K$ to $F$} is the map $\mathrm{Norm}_{K/F}:K\to F$ 
defined by
\begin{equation*}
	\mathrm{Norm}_{K/F}(\alpha) = \prod_{\sigma\in G}\sigma(\alpha).
\end{equation*}
It is obvious that right hand side is invariant under Galois conjugation by an element of $G$ so that $\mathrm{Norm}_{K/F}(\alpha)$ does indeed
belong to $F$.  Of course, if $\alpha\in F$ then $\mathrm{Norm}_{K/F}(\alpha) = \alpha^{[K:F]}$.  Furthermore, 
$\mathrm{Norm}_{K/F}(\alpha\beta) = \mathrm{Norm}_{K/F}(\alpha)\mathrm{Norm}_{K/F}(\beta)$ so that the norm is a homomorphism from $K^\times$ to $F^\times$.

\bigskip

\noindent{\it Proof of Theorem \ref{StrongMetricBound}}.
Assume $\alpha\in\algt\setminus\tor(\algt)$ and let $\varepsilon > 0$.  Further suppose that $(\alpha_1,\alpha_2,\ldots)\in \X(\algt)$ is such that
$\alpha= \alpha_1\alpha_2\cdots$ and
\begin{equation} \label{Representation}
	\max\{M(\alpha_1),M(\alpha_2),\ldots\} \leq M_\infty(\alpha) + \varepsilon.
\end{equation}
Let $F = \rat(\alpha)$ and assume that $K$ is a Galois extension of $F$ containing each 
element $\alpha_n$.  Since $\alpha_n = 1$ for almost every $n$, the Galois group $G = \mathrm{Aut}(K/F)$ is finite.

First assume that $\mathrm{Norm}_{K/F}(\alpha_n)$ is a root of unity for all $n$.  Then we have that
\begin{equation*}
	\alpha^{[K:F]} = \mathrm{Norm}_{K/F}(\alpha) = \prod_{n=1}^\infty \mathrm{Norm}_{K/F}(\alpha_n)
\end{equation*}
since the norm is a multiplicative homomorphism.  Therefore, $\alpha$ is a root of unity which is a
contradiction.

Now we may assume that there exists $\beta \in \{\alpha_1,\alpha_2,\ldots\}$ such that $\mathrm{Norm}_{K/F}(\beta)$
is not a root of unity.  For simplicity, we let $H = \mathrm{Aut}(K/F(\beta))$ and let $S$ be a complete set of coset 
representatives of $H$ in $G$.  Also, assume that $\beta_1,\ldots,\beta_M$ are the conjugates of $\beta$ over $F$ 
and let $\hat\beta = \beta_1\cdots\beta_M$.  We obtain that
\begin{equation*}
	\mathrm{Norm}_{K/F}(\beta) = \prod_{\sigma\in G}\sigma(\beta) = \prod_{\sigma\in S}\sigma(\beta)^{|H|} = \hat\beta^{|H|}
\end{equation*}
so that $\hat\beta$ must not be a root of unity.
	
Using \eqref{MahlerProduct}, we know that $M(\beta)$ is the product of the heights of the conjugates of $\beta$ over $\rat$.  
So the product of the heights of its conjugates over $F$ is potentially smaller.  Then using the triangle inequality for the Weil height, we find that
\begin{equation*}
	M(\beta) \geq \prod_{m=1}^M H(\beta_m) \geq H(\hat\beta)
\end{equation*}
and it follows that
\begin{equation} \label{EpsilonBound}
	M_\infty(\alpha) + \varepsilon \geq H(\hat\beta) \geq \inf\{\gamma\in\rat(\alpha):  H(\gamma)> 1\}.
\end{equation}
Since the right hand side of \eqref{EpsilonBound} does not depend on $\varepsilon$, we may let $\varepsilon$ tend to zero to obtain the desired result.\qed
\bigskip

\noindent {\it Proof of Corollary \ref{SMZeroSet}}.
If $\alpha$ is a root of unity, then we have $M_\infty(\alpha) \leq M(\alpha) = 1$ so that $M_\infty(\alpha) = 1$.  If
$\alpha$ is not a root of unity, then Theorem \ref{StrongMetricBound} gives
\begin{equation*}
	M_\infty(\alpha) \geq \inf\{\gamma\in\rat(\alpha):  H(\gamma)> 1\}.
\end{equation*}
However, Dobrowolski's Theorem implies that the right hand side is stricly greater than $1$ which establishes the corollary.\qed
\bigskip

The proof of Theorem \ref{BasicProperties} will require a technical lemma.

\begin{lem} \label{Degree}
  Let $\alpha\in\algt$ and define $d(\alpha) = \min\{\deg(\zeta\alpha):\zeta\in\tor(\algt)\}$. 
  We have that $d(\alpha)\leq r d(\alpha^r)$ for all positive integers $r$ and
  \begin{equation*}
    M_\infty(\alpha) = \inf\left\{\max_{n\geq 1}\left\{H(\alpha_n)^{d(\alpha_n)}\right\}: (\alpha_1,\alpha_2,\ldots) 
      \in \tau^{-1}(\alpha)\right\}.
  \end{equation*}
\end{lem}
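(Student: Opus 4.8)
The plan is to establish the two assertions separately, both resting on the identity $M(\beta) = H(\beta)^{\deg\beta}$ together with the fact that $M_\infty$ and $H$ are unchanged when their argument is multiplied by a root of unity. For the inequality $d(\alpha) \le r\,d(\alpha^r)$, I would start from a minimal representative: pick $\zeta\in\tor(\algt)$ with $\deg(\zeta\alpha^r) = d(\alpha^r)$. The field $\rat(\zeta\alpha^r)$ is contained in $\rat(\alpha)$, and since $\zeta\alpha^r = (\xi\alpha)^r$ for a suitable root of unity $\xi$ (choosing an $r$-th root of $\zeta$), we have $\rat(\zeta\alpha^r) = \rat((\xi\alpha)^r) \subseteq \rat(\xi\alpha)$ with $[\rat(\xi\alpha):\rat((\xi\alpha)^r)] \le r$, because $\xi\alpha$ is a root of $x^r - \zeta\alpha^r$ over $\rat(\zeta\alpha^r)$. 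Hence $d(\alpha) \le \deg(\xi\alpha) = [\rat(\xi\alpha):\rat] \le r\,[\rat(\zeta\alpha^r):\rat] = r\,d(\alpha^r)$.

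For the formula for $M_\infty(\alpha)$, the key observation is that $M_\infty$ is defined as an infimum over \emph{all} representations $(\alpha_1,\alpha_2,\ldots)\in\tau^{-1}(\alpha)$ of $\max_n M(\alpha_n)$, but the value of the resulting infimum is unaffected if we are permitted to replace each $\alpha_n$ by $\zeta_n\alpha_n$ for roots of unity $\zeta_n$ with $\prod_n\zeta_n = 1$. Concretely, I would argue: for any representation with $\max_n M(\alpha_n) \le M_\infty(\alpha)+\varepsilon$, we may replace $\alpha_n$ by a conjugate-by-root-of-unity realizing $d(\alpha_n)$ — but one must be careful that the product is preserved. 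The clean way around this bookkeeping is to insert a correcting root of unity as an extra coordinate: if $\alpha_n' = \zeta_n\alpha_n$ with $\deg\alpha_n' = d(\alpha_n)$, set $\zeta = \prod_n\zeta_n^{-1}$ (a finite product, hence a root of unity), and observe $(\zeta, \alpha_1', \alpha_2', \ldots) \in \tau^{-1}(\alpha)$; since $M(\zeta) = 1$ and $M(\alpha_n') = H(\alpha_n')^{d(\alpha_n)} \le H(\alpha_n)^{\deg\alpha_n} = M(\alpha_n)$ (the height being conjugation-invariant, $H(\alpha_n') = H(\alpha_n)$, and $d(\alpha_n)\le\deg\alpha_n$), we get
\begin{equation*}
  \max\{1, \max_n H(\alpha_n')^{d(\alpha_n')}\} \le \max_n H(\alpha_n)^{d(\alpha_n)} \le \max_n M(\alpha_n).
\end{equation*}
This shows the infimum on the right-hand side of the claimed formula is $\le M_\infty(\alpha)$. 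Conversely, for any representation, $H(\alpha_n)^{d(\alpha_n)} \le H(\alpha_n)^{\deg\alpha_n} = M(\alpha_n)$ coordinatewise, so $\max_n H(\alpha_n)^{d(\alpha_n)} \le \max_n M(\alpha_n)$, giving the reverse inequality $\ge M_\infty(\alpha)$ after taking infima.

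The main obstacle I anticipate is the $\le M_\infty(\alpha)$ direction of the formula: one cannot simply replace $\alpha_n$ by its minimal-degree root-of-unity twist without disturbing the constraint $\prod\alpha_n = \alpha$, so the argument genuinely needs the device of absorbing the accumulated root-of-unity discrepancy into one additional coordinate (which costs nothing since $M$ of a root of unity is $1$). A secondary subtlety is ensuring that in the formula $d(\alpha_n')$ equals $d(\alpha_n)$ — but this is immediate since $\alpha_n$ and $\alpha_n'$ differ by a root of unity, so the sets $\{\deg(\zeta\alpha_n):\zeta\in\tor\}$ and $\{\deg(\zeta\alpha_n'):\zeta\in\tor\}$ coincide. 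Everything else is a direct substitution using $M = H^{\deg}$ and the monotonicity $d(\alpha_n)\le\deg\alpha_n$.
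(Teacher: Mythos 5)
Your approach matches the paper's: the inequality $d(\alpha)\le r\,d(\alpha^r)$ rests on the observation that $\alpha$ satisfies a degree-$r\deg\alpha^r$ polynomial obtained from the minimal polynomial of $\alpha^r$ (you phrase it via the tower $\rat(\zeta\alpha^r)\subseteq\rat(\xi\alpha)$, the paper via the substitution $x\mapsto x^r$; same content), and the formula for $M_\infty$ is proved by replacing each $\alpha_n$ with a minimal-degree root-of-unity twist $\alpha_n'$ and restoring the product by inserting roots of unity into the decomposition (you fold them into one correcting coordinate $\zeta$, the paper keeps each $\zeta_n^{-1}$ as its own coordinate; these are cosmetically different).

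However, the two directions of the equality are mislabeled, and the ``Conversely'' paragraph as written does not prove what you claim. You assert that the construction with the extra coordinate shows $\inf\max_n H(\alpha_n)^{d(\alpha_n)} \le M_\infty(\alpha)$, and that the pointwise inequality $H(\alpha_n)^{d(\alpha_n)}\le M(\alpha_n)$ gives the reverse inequality $\inf\max_n H(\alpha_n)^{d(\alpha_n)} \ge M_\infty(\alpha)$. In fact it is the other way around: the pointwise inequality $d\le\deg$ immediately gives $\inf\max_n H(\alpha_n)^{d(\alpha_n)} \le M_\infty(\alpha)$ upon taking infima (the easy direction, no construction required), while the hard direction $M_\infty(\alpha)\le\inf\max_n H(\alpha_n)^{d(\alpha_n)}$ is precisely the one that needs the construction. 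The fix is to observe that for the modified tuple $(\zeta,\alpha_1',\alpha_2',\dots)$ the $M$-max equals $\max_n H(\alpha_n)^{d(\alpha_n)}$ (since $M(\alpha_n')=H(\alpha_n')^{\deg\alpha_n'}=H(\alpha_n)^{d(\alpha_n)}$ and $M(\zeta)=1$), so that $M_\infty(\alpha)\le\max_n H(\alpha_n)^{d(\alpha_n)}$ for every decomposition, and then take the infimum. You have all the ingredients in hand — you even compute $M(\alpha_n')=H(\alpha_n)^{d(\alpha_n)}$ explicitly — so this is a logical slip rather than a missing idea, but as stated your second paragraph re-proves the easy inequality and leaves the opposite direction unaddressed.
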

\begin{proof}
  To prove the first statement, let $f(x)$ denote the minimal polynomial of $\alpha^r$ over $\intg$.  Hence,
  the polynomial $f(x^r)$ vanishes at $\alpha$ and has degree $r\deg \alpha^r$.  It follows that
  $\deg\alpha\leq r\deg \alpha^r$.  But then
  \begin{align*}
    r d(\alpha^r) & = r\cdot\inf\{\deg(\zeta\alpha^r):\zeta\in\tor(\algt)\} \\
    & = \inf\{r\deg(\zeta^r\alpha^r):\zeta\in\tor(\algt)\} \\
    & \geq \inf\{\deg(\zeta\alpha):\zeta\in\tor(\algt)\} = d(\alpha).
  \end{align*}
  
  To prove the second statement, we first observe that $\deg(\alpha)\geq d(\alpha)$ so that
  \begin{equation} \label{GEQ}
    M_\infty(\alpha) \geq  \inf\left\{\max_{n\geq 1}\left\{H(\alpha_n)^{d(\alpha_n)}\right\}: 
      (\alpha_1,\alpha_2,\ldots)  \in \tau^{-1}(\alpha)\right\}.
  \end{equation}
  Now assume that $\tau(\alpha_1,\alpha_2,\ldots)  = \alpha$ so that $\alpha = \prod_{n=1}^\infty\alpha_n$.
  For each $n$ we select $\zeta_n$ such that $\deg(\zeta_n\alpha_n) = d(\alpha_n)$.  We have that
  \begin{equation*}
    \alpha = \prod_{n=1}^\infty (\alpha_n\zeta_n)\zeta_n^{-1},
  \end{equation*}
  and therefore,
  \begin{align*}
    M_\infty(\alpha) & \leq \max_{n\geq 1}\left\{
      \max\left\{H(\alpha_n\zeta_n)^{\deg(\alpha_n\zeta_n)},H(\zeta_n^{-1})^{\deg(\zeta_n^{-1})}\right\}\right\} \\
    & = \max_{n\geq 1}\left\{H(\alpha_n)^{d(\alpha_n)}\right\}
  \end{align*}
  The result follows by taking the infimum of both sides over all
  $(\alpha_1,\alpha_2,\ldots)  \in\tau^{-1}(\alpha)$.
\end{proof}

\begin{proof}[Proof of Theorem \ref{BasicProperties}]
We first prove that $M_\infty(\alpha^r) = M_\infty(\alpha)$ for all positive integers $r$.  The strong
triangle inequality implies immediately that
\begin{equation*}
  M_\infty(\alpha^r) \leq M_\infty(\alpha)
\end{equation*}
so we must prove the opposite inequality.  By Lemma \ref{Degree} we have that
\begin{equation*}
  M_\infty(\alpha) = \inf\left\{\max_{n\geq 1}\left\{H(\alpha_n)^{d(\alpha_n)}\right\}: (\alpha_1,\alpha_2,\ldots) 
    \in \tau^{-1}(\alpha)\right\}.
\end{equation*}
Each term $H(\alpha)^{d(\alpha)}$ is well-defined on the quotient group $\G=\algt/\tor(\algt)$.  Hence, we may
instead take the infimum over all $(\alpha_1,\alpha_2,\ldots)\in \X(\G)$ with 
$\tau(\alpha_1,\alpha_2,\ldots) = \alpha$ and we obtain the same value.  Applying both statements of
Lemma \ref{Degree} we obtain that
\begin{align*}
  M_\infty(\alpha) & = \inf\left\{\max_{n\geq 1}\left\{H(\alpha_n)^{d(\alpha_n)}\right\}: 
    \alpha = \prod_{n=1}^\infty\alpha_n\right\} \\
  & \leq \inf\left\{\max_{n\geq 1}\left\{H(\alpha_n)^{rd(\alpha_n^r)}\right\}: 
    \alpha = \prod_{n=1}^\infty\alpha_n\right\} \\
  & = \inf\left\{\max_{n\geq 1}\left\{H(\alpha_n^r)^{d(\alpha_n^r)}\right\}: 
    \alpha = \prod_{n=1}^\infty\alpha_n\right\}
\end{align*}
for all $\alpha\in \G$ and all positive integers $r$.

Now define $g_r:\G\to \G$ by $g_r(\alpha) = \alpha^r$ and note that $g_r$ is an automorphism of $\G$.  
We have shown that
\begin{equation} \label{AutoSwitch}
  M_\infty(\alpha) \leq \inf\left\{\max_{n\geq 1}\left\{H(g_r(\alpha_n))^{d(g_r(\alpha_n))}\right\}: 
    \alpha = \prod_{n=1}^\infty\alpha_n\right\}.
\end{equation}
Since $g_r^{-1}$ is also an automorphism, we may take the infimum on the right hand side of \eqref{AutoSwitch}
over all $(\alpha_1,\alpha_2,\ldots)\in \X(\G)$ such that $\alpha = \prod_{n=1}^\infty g_r^{-1}(\alpha_n)$.
We conclude that
\begin{align*}
  M_\infty(\alpha) & \leq \inf\left\{\max_{n\geq 1}\left\{
      H(g_r(g_r^{-1}(\alpha_n)))^{d(g_r(g_r^{-1}(\alpha_n)))}\right\}: 
    \alpha = \prod_{n=1}^\infty g_r^{-1}(\alpha_n)\right\} \\
  & = \inf\left\{\max_{n\geq 1}\left\{H(\alpha_n)^{d(\alpha_n)}\right\}: 
    g_r(\alpha) = \prod_{n=1}^\infty\alpha_n\right\} \\
  & = \inf\left\{\max_{n\geq 1}\left\{H(\alpha_n)^{d(\alpha_n)}\right\}: 
    \alpha^r = \prod_{n=1}^\infty\alpha_n\right\} \\
  & = M_\infty(\alpha^r)
\end{align*}
which completes the proof of the first statement when $r$ is a positive integer.  If $r<0$ is an integer then
\begin{equation*}
  M_\infty(\alpha^r) = M_\infty((\alpha^{-1})^{-r}) = M_\infty(\alpha^{-1}) = M_\infty(\alpha).
\end{equation*}
If we have $r/s\in\rat$ then 
\begin{equation*}
  M_\infty(\alpha^{r/s}) = M_\infty((\alpha^{r/s})^s) = M_\infty(\alpha^r) = M_\infty(\alpha).
\end{equation*}

To prove the second statement we note that if $\alpha$ fails to be a $p$-adic unit then $M(\alpha) \geq p$.  To see this,
let $K = \rat(\alpha)$ and let $v$ be a place of $K$ such that $|\alpha|_v \ne 1$.  Since $M(\alpha) = M(\alpha^{-1})$
me may assume without loss of generality that $|\alpha|_v > 1$.  Further, write 
\begin{equation*}
	O_v = \{z\in K_v: |z|_v \leq 1\}\quad \mathrm{and} \quad M_v = \{z\in K_v: |z|_v < 1\}
\end{equation*}
for the ring of $v$-adic integers in $K_v$ and its unique maximal ideal, respectively.  Let $\pi_v$ be a generator
of $M_v$ so that whenever $|z|_v > 1$ we have that $|z|_v \geq |\pi_v|_v^{-1}$.  It is also well-known that 
\begin{equation} \label{UniformParam}
	|\pi_v|_v^{-[K:\rat]} = p^{f_v}
\end{equation}
where $p^{f_v}$ is the cardinality of the residue field $O_v/M_v$.  We now notice that
\begin{align*}
	M(\alpha) & = \prod_w \max\{1,|\alpha|_w\}^{\deg \alpha} \\
		& \geq |\alpha|_v^{\deg \alpha} \\
		& \geq |\pi_v|_v^{-[K:\rat]} \\
		& = p^{f_v} \geq p.
\end{align*}

Now let $\varepsilon > 0$ and assume that $(\alpha_1,\alpha_2,\ldots)\in \X(\algt)$ is such that 
$\alpha = \alpha_1\alpha_2\cdots$ and 
\begin{equation*}
  M_\infty(\alpha) \geq \max\{M(\alpha_n): n\geq 1\} - \varepsilon.
\end{equation*}
Further assume that $p$ is the largest prime such that $\alpha$ is not a $p$-adic unit.  By our earlier remarks there exists $n$ such that $\alpha_n$ fails to be a $p$-adic 
unit, and thus $M(\alpha_n) \geq p$ and
\begin{equation*}
  M_\infty(\alpha) \geq \max\{M(\alpha_n): n\geq 1\} - \varepsilon \geq p -\varepsilon.
\end{equation*}
The result follows by letting $\varepsilon$ tend to zero.
\end{proof}

\begin{proof}[Proof of Corollary \ref{Surd}]
Since $\alpha\in\rat$ we may write
\begin{equation*}
  \alpha = \prod_{n=1}^N q_n^{r_n}
\end{equation*}
where $q_n$ are rational primes and $r_n$ are non-zero integers.  Assume that $p$ is the largest of the
primes $q_n$.   Then the strong triangle inequality for $M_\infty$ and Theorem \ref{BasicProperties} 
imply that
\begin{equation*}
  M_\infty(\alpha) \leq \max_{1\leq n\leq N}M_\infty(q_n^{r_n}) = \max_{1\leq n\leq N}M_\infty(q_n) 
  \leq \max_{1\leq n\leq N} q_n = p.
\end{equation*}
By the second statement of Theorem \ref{BasicProperties} we also know that $M_\infty(\alpha) \geq p$
so that $M_\infty(\alpha) = p$.  Then applying the first statement again we obtain that
\begin{equation*}
  M_\infty(\alpha^{1/d}) = M_\infty(\alpha) = p.\qedhere
\end{equation*}
\end{proof}

\end{document}